\documentclass[11pt]{amsart}
\usepackage{amsmath,amssymb,amsthm}
\usepackage{geometry}
\newtheorem{theorem}{Theorem}
\newtheorem{lemma}[theorem]{Lemma}
\newtheorem{definition}[theorem]{Definition}
\newtheorem{remark}[theorem]{Remark}

\newtheorem{cor}[theorem]{Corollary}

\newtheorem*{coveringcriterion}{Two-sided covering criterion}
\newtheorem*{extensionlemma}{Uniform extension lemma}

\DeclareMathOperator{\RUC}{RUC}
\DeclareMathOperator{\Tame}{Tame}

\DeclareMathOperator{\Homeo}{Homeo}

\keywords{Tame flows, precompact topological groups, right uniformly continuous functions, $\ell_1$-sequences}
\subjclass[2020]{Primary 37B05; Secondary 22A05, 46B20}

\title{Tame Topological Groups are precompact}
\author{Eli Glasner}
\address{School of Mathematics\\
Tel Aviv University\\
Tel Aviv, Israel}
\email{glasner@math.tau.ac.il}

\date{August 12, 2026}

\begin{document}

\begin{abstract}
We prove that a Hausdorff topological group is tame if and only if it is precompact. 
\end{abstract}

\maketitle

\section{Introduction}

Let $G$ be a Hausdorff topological group.  The right uniform structure on
$G$ is generated by the entourages
\[
 R_U=\{(g,h)\in G\times G:gh^{-1}\in U\},
\]
where $U$ ranges over the open neighborhoods of the identity $e$.
Throughout, all Banach spaces and function spaces are over \(\mathbb R\).

\begin{definition}
For a compact space $X$, a {\em $G$-dynamical system}  $(X,G)$ is defined 
by a continuous homomorphism of $G$ into the group $\Homeo(X)$ of self homeomorphisms of $X$
equipped with the topology of uniform convergence. We denote by $gx$ the image of $x \in X$ under the
homeomorphism corresponding to $g \in G$ under this homomorphism.
\end{definition}

\begin{definition}
A bounded continuous function $f:G\to\mathbb R$ is \emph{right uniformly
continuous} if for every $\varepsilon>0$ there exists a neighborhood $U$ of
$e$ such that
\[
 |f(g)-f(h)|<\varepsilon
 \quad\text{whenever}\quad gh^{-1}\in U.
\]
The unital commutative Banach algebra of all such functions is denoted by
$\RUC(G)$.
\end{definition}

For \(g\in G\) and \(f\in\RUC(G)\), define
\[
 (f\cdot g)(h)=f(gh)
 \qquad (h\in G).
\]
Then
\[
 (f\cdot g)\cdot g'=f\cdot(gg')
 \qquad (g,g'\in G),
\]
so this defines a right action of \(G\) on \(\RUC(G)\). We denote the
orbit of \(f\) by
\[
 fG=\{f\cdot g:g\in G\}.
\]

\begin{definition}[\cite{Rosenthal1974}]
Let $(f_n)_{n\in\mathbb{N}}$ be a sequence of bounded real-valued functions
on  a set $X$.
\begin{enumerate}
\item
 It is called \emph{independent} if there exist real numbers
\(a<b\) such that, for every pair of finite disjoint sets
\(P,Q\subset\mathbb{N}\), the set
\[
\bigcap_{n\in P}\{x\in X:f_n(x)<a\}
\cap
\bigcap_{n\in Q}\{x\in X:f_n(x)>b\}
\]
is nonempty.
\item
It is an  {\em $\ell_1$ sequence} if there are strictly positive constants
$a$ and $b$ such that
$$
a\sum_{k=1}^n |c_k| \le \left\| \sum_{k=1}^n c_k f_k \right\|
\le a\sum_{k=1}^n |c_k|
$$
for all $n\in \mathbb{N}$ and $c_1,\dots,c_n\in \mathbb{R}$.
\end{enumerate}
\end{definition}

\begin{definition}
A function $f\in\RUC(G)$ is \emph{tame} if its  orbit
\[
f G=\{f \cdot g : g\in G\}
\]
does not contain a sequence equivalent to the standard basis of $\ell_1$.
A compact dynamical system $(X,G)$ is called tame if for every $F \in C(X)$ and $x \in X$, the 
RUC function $f(g) = F(gx)$ is tame. It follows that the collection $\Tame(G)$ consisting of tame functions
is a closed subalgebra of $RUC(G)$. 
Thus, the group $G$ is tame iff every point-transitive dynamical system $(X,G)$ is tame.
\end{definition}

\begin{remark}
In the last section we will show
 that, equivalently one can require that $fG$ does not contain an infinite independent sequence,
see e.g. \cite{Rosenthal1974, vD-89, Ko-95}. 
For more details on tame flows see e.g. \cite{GM-18}.
\end{remark}

\begin{definition}
A topological group $G$ is \emph{precompact} if for every open neighborhood
$U$ of $e$ there exists a finite set $F\subseteq G$ such that $G=UF$.
Equivalently, one may require $G=FU$.
\end{definition}

\begin{definition}
Let $V$ be a symmetric neighborhood of $e$.  A subset $D\subseteq G$ is
called \emph{$V$-discrete} if the sets $Vd$, $d\in D$, are pairwise
disjoint.  A set is \emph{right uniformly discrete} if it is $V$-discrete
for some symmetric identity neighborhood $V$.
\end{definition}

We shall use the following characterization of precompactness.

\begin{coveringcriterion}\label{crit:two-sided}
A topological group $G$ is precompact if and only if, for every identity
neighborhood $U$, there exist finite sets $A,B\subseteq G$ such that
\[
 G=AUB.
\]
\end{coveringcriterion}

\begin{proof}
If $G$ is precompact, then $G=UF$ for some finite $F$, so one may take
$A=\{e\}$ and $B=F$.

Conversely, suppose the displayed condition holds.  For a given $U$, choose
finite $A,B$ with $G=AUB$ and put $F=A\cup B\cup\{e\}$.  Then
\[
 G=AUB\subseteq FUF\subseteq G,
\]
so $G=FUF$.  The implication from this latter condition to precompactness is
Proposition~4.3 of Bouziad and Troallic \cite{BouziadTroallic2007} (see also \cite{Us-02}).
\end{proof}

We shall also use a standard extension fact for uniform spaces.  It is
included to avoid any metrizability assumption on $G$.

\begin{extensionlemma}\label{lem:extension}
Let $X$ be a uniform space, let $D\subseteq X$ be uniformly discrete, and
let $u:D\to\mathbb R$ be bounded.  Then $u$ extends to a bounded uniformly
continuous function $\widetilde u:X\to\mathbb R$ satisfying
\[
 \|\widetilde u\|_\infty=\|u\|_\infty.
\]
\end{extensionlemma}

\begin{proof}
Let $M=\|u\|_\infty$.  The assertion is immediate if $M=0$.  Choose an
entourage $E$ such that the sets $E[d]$, $d\in D$, are pairwise disjoint.
By the pseudometrization lemma for uniform spaces, there exist a uniformly
continuous pseudometric $p$ on $X$ and a number $\delta>0$ such that
\[
 \{(x,y):p(x,y)<\delta\}\subseteq E;
\]
see, for example, \cite[Chapter~I]{Isbell1964}.  Hence
$p(d,d')\geq\delta$ whenever $d,d'\in D$ are distinct.  It follows that
$u$ is $L$-Lipschitz on $(D,p)$ for $L=2M/\delta$.

The McShane formula (commonly known as the McShane--Whitney extension theorem)
\[
 U(x)=\inf_{d\in D}\bigl(u(d)+L p(x,d)\bigr),
\]
defines an $L$-Lipschitz extension of $u$ to $X$  
(see \cite{McShane1934}; the proof works, verbatim, for a pseudometric as well). 
Finally, set
\[
 \widetilde u(x)=\max\{-M,\min\{M,U(x)\}\}.
\]
Then $\widetilde u$ is uniformly continuous, agrees with $u$ on $D$, and
has norm $M$.
\end{proof}

\section{Main result}

It is well known that  every bounded RUC weakly almost periodic function is tame. 
Thus, $WAP(G)$, the subalgebra of 
$RUC(G)$ containing all the waekly almost periodic functions, is a subalgebra of $\Tame(G)$, see \cite{GM-18}.
It was shown in \cite{MPU-01} that every WAP topological group is precompact.

\begin{theorem}\label{thm:main}
Let $G$ be a Hausdorff topological group.  Then $G$ is tame if and only if
$G$ is precompact.
\end{theorem}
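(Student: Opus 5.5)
We treat the two implications separately; the easy direction is that precompact groups are tame. If $G$ is precompact, then every $f\in\RUC(G)$ is almost periodic: $G$ embeds densely, as a uniform subspace, in its completion $\widehat G$, and $\widehat G$ is compact. Every right uniformly continuous function extends to a continuous function on $\widehat G$, and the orbit $fG$ is relatively norm compact by the Arzel\`a--Ascoli theorem applied on $\widehat G$. A norm-precompact set cannot contain a sequence equivalent to the $\ell_1$ basis, since such a sequence has no norm-Cauchy subsequence: $\|f_n-f_m\|\ge 2a$. Hence every RUC function is tame, and in particular every point-transitive system is tame. Equivalently, $AP(G)\subseteq WAP(G)\subseteq\Tame(G)$.

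For the converse we argue by contraposition. Assume $G$ is not precompact and produce a non-tame $f\in\RUC(G)$. Note that $\RUC(G)$ is realized by the point-transitive system on the Gelfand space of $\RUC(G)$, with $x$ the point evaluation at $e$, so $G$ is tame iff $\RUC(G)=\Tame(G)$. By the two-sided covering criterion there is a symmetric identity neighborhood $U$ such that $G\neq AUB$ for all finite $A,B$. Choose a symmetric $V$ with $V^4\subseteq U$. We build inductively a sequence $g_1,g_2,\dots$, and, for each finite $P\subseteq\mathbb N$, elements whose translates realize all sign patterns. The concrete plan is to find points $t_n$ and $s_S$ ($S\subseteq\mathbb N$ finite) with the following properties.
\begin{enumerate}
\item The set $D=\{s_S t_n\}$ is $V$-discrete.
\item We have $f\cdot s_S(t_n)=f(s_S t_n)=1$ if $n\in S$ and $=-1$ otherwise.
\end{enumerate}
Then define $u=\pm1$ on $D$ accordingly and extend by the uniform extension lemma to $f\in\RUC(G)$. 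The sequence $(f\cdot s_S)$ is then not quite indexed by $\mathbb N$, so it is better to reverse roles: use translates $f\cdot t_n$ evaluated at points $s_S$, i.e.\ $f(t_n s_S)$. Independence of $(f\cdot t_n)$ requires $f(t_n s_S)<-1/2$ for $n\notin S$ and $f(t_n s_S)>1/2$ for $n\in S$, for $n\in P\cup Q$. Rosenthal's theorem then yields an $\ell_1$ subsequence in $fG$, so $f$ is not tame. This independence-to-$\ell_1$ step is the equivalence announced in the Remark.

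The core construction is an inductive choice. Enumerate the finite subsets $S$ and the indices $n$ in a single sequence of stages. At each stage only finitely many points $x_1,\dots,x_k$ of $D$ have been fixed, and we must choose a new $t_n$ or $s_S$ so that all newly created products $t_m s_S$ (with $m\le n$ and $S$ already chosen) avoid $V^2x_i$ for the old points and are mutually separated. A new product $t s$ landing in $V^2 x_i$ means that $t\in x_i V^2 s^{-1}$ when $s$ is fixed, or that $s\in t^{-1}V^2x_i$ when $t$ is fixed. So the forbidden region for the new element is a finite union of sets of the form $aV^2b$, and since $V^2\subseteq U$ and $G\ne AUB$, a good choice exists. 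The same argument handles separation among the new products themselves, using $t_m s V\cap t_{m'} s V=\emptyset$ iff $t_m^{-1}t_{m'}\notin sV^2s^{-1}$. That condition is of the form $t_{m'}\notin t_m s V^2 s^{-1}$, again two-sided, and the $V$-discreteness of left translates must be converted to right discreteness $Vd$. I expect this bookkeeping to be the main obstacle: ensuring that every forbidden set is genuinely of the form $aUb$ with finitely many pieces, and that $V$-discreteness is measured on the correct side for the extension lemma to yield right uniform continuity. If the sides mismatch, I would replace $f$ by $g\mapsto f(g^{-1})$, or work with $s_S t_n$ instead, choosing whichever ordering makes each forbidden set two-sided.
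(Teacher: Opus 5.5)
Your overall plan is the paper's: the two-sided covering criterion, a right uniformly discrete set of products carrying prescribed signs, the uniform extension lemma, and then $\ell_1$ behaviour of the translates $f\cdot t_n$. Your precompact direction is fine. The gap is in the inductive construction as you set it up.

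Because $s_S$ is indexed by $S$ and must give $-1$ at every $n\notin S$, each row $\{t_ns_S\}_n$ is infinite. So you interleave the choices, and, as you say, when a new $t_n$ is chosen you create $t_ns_S$ for all $S$ already chosen. Two of these new products share their \emph{left} factor. Right discreteness (disjointness of the sets $Vd$) is what the extension lemma needs to return an $\RUC$ function for the convention $(f\cdot g)(h)=f(gh)$, and for these two products it requires
\[
 (t_ns_S)(t_ns_{S'})^{-1}=t_n\,(s_Ss_{S'}^{-1})\,t_n^{-1}\notin V^2 .
\]
This is a conjugacy condition on $t_n$. The set of $t$ violating it is in general not a finite union of two-sided translates $aV^2b$. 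So $G\neq AUB$ gives you no way to avoid it together with the other forbidden sets, and your claim that every forbidden region has that form fails here.

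Your computation with $t_msV\cap t_{m'}sV$ treats left translates and the other type of pair, so it does not address this one. Passing to $g\mapsto f(g^{-1})$ or reversing the order of the factors does not help either. It only moves the same conjugacy condition to the stages where a new $s_S$ meets several old $t_m$. (Separately, $ts\in V^2x$ means $t\in V^2xs^{-1}$, not $t\in xV^2s^{-1}$; that slip is harmless.)

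The missing idea is to decouple the two kinds of choices and make each evaluation point responsible for only finitely many translates.
\begin{enumerate}
\item First fix all the $t_n$ with $t_nt_m^{-1}\notin V^2$ for $n\neq m$, by choosing $t_n\notin V^2\{t_1,\dots,t_{n-1}\}$.
\item Index evaluation points by finite sign patterns $(N,\varepsilon)$ with $\varepsilon\in\{-1,1\}^N$, instead of by finite sets $S$. Choose them one at a time, so that a new point $h$ contributes only $t_1h,\dots,t_Nh$.
\item These products share their \emph{right} factor, so $(t_ih)(t_jh)^{-1}=t_it_j^{-1}\notin V^2$ automatically.
\item Separation from the finitely many earlier points $y$ amounts to $h\notin t_i^{-1}V^2y$. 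This is a finite union of double cosets, which the criterion lets you avoid.
\end{enumerate}
Set $f(t_ih)=\varepsilon_i$ and extend. Evaluating at the point attached to the pattern with $a_i\varepsilon_i=|a_i|$ gives $\bigl\|\sum_i a_i\,f\cdot t_i\bigr\|_\infty=\sum_i|a_i|$ directly. This is exactly the paper's proof. Your independence-plus-Rosenthal ending would also work once $D$ is built this way.
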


\begin{proof}
Suppose first that $G$ is precompact.  Its two-sided uniformity completion $K$ is a compact
topological group in which $G$ is dense.  Every $f\in\RUC(G)$ extends
uniquely to a function $\widehat f\in C(K)$.  The map
\[
 K\longrightarrow C(K),\qquad k\longmapsto L_k\widehat f,
\]
is continuous in the uniform norm.  Consequently, the orbit of
$\widehat f$ is norm compact, and the orbit of $f$ is norm precompact.
Thus every member of $\RUC(G)$ is almost periodic.  Since every almost
periodic function is tame, $G$ is tame.

Conversely, suppose that $G$ is not precompact.  By the two-sided covering
criterion, there exists an open identity neighborhood $U$ such that
\[
 G\neq AUB                                                   \tag{1}
\]
for every pair of finite sets $A,B\subseteq G$.  Choose a symmetric open
identity neighborhood $V$ satisfying $V^2\subseteq U$.  It follows from
(1) that
\[
 G\neq AV^2B                                                 \tag{2}
\]
for every pair of finite sets $A,B\subseteq G$.

We first construct a sequence $(g_n)_{n\geq1}$ such that 
\[
 g_ng_i^{-1}\notin V^2 \qquad (n\neq i).                    \tag{3}
\]
Let $g_1 =e$ and having chosen $g_1,\dots,g_{n-1}$, use (2) with
$A=\{e\}$ and $B=\{g_1,\dots,g_{n-1}\}$ to choose
\[
 g_n\notin V^2\{g_1,\dots,g_{n-1}\}.
\]
Since $V^2$ is symmetric, this gives (3) in both orders.

Let
\[
 \mathcal S=\bigcup_{N\geq1}\bigl(\{N\}\times\{-1,1\}^N\bigr),
\]
and enumerate this countable set as
\[
 (N_k,\varepsilon^{(k)}) \qquad (k\geq1),
\]
where
\[
 \varepsilon^{(k)}=
 \bigl(\varepsilon^{(k)}_1,\dots,\varepsilon^{(k)}_{N_k}\bigr)
 \in\{-1,1\}^{N_k}.
\]
Thus every finite sign pattern occurs in the enumeration.

\vspace{.3cm}

We shall choose the elements \(h_k \in G\) inductively so that, for every
\(k \geq 1\), the finite set
\[
D_k
=
\{g_i h_\ell : 1 \leq \ell \leq k,\ 1 \leq i \leq N_\ell\}
\]
is \(V\)-discrete; that is, the sets
\[
Vg_i h_\ell
\qquad
(1 \leq \ell \leq k,\ 1 \leq i \leq N_\ell)
\]
are pairwise disjoint. Equivalently, since \(V\) is symmetric, we require
\[
(g_i h_\ell)(g_j h_m)^{-1} \notin V^2
\]
whenever \((\ell,i) \neq (m,j)\).

Suppose that \(h_1,\ldots,h_{k-1}\) have already been chosen so that
\(D_{k-1}\) is \(V\)-discrete, where
\[
D_{k-1}
=
\{g_i h_\ell : 1 \leq \ell < k,\ 1 \leq i \leq N_\ell\}.
\]
We shall choose \(h_k\) so that the new block
\[
\{g_i h_k : 1 \leq i \leq N_k\}
\]
is \(V\)-separated both internally and from \(D_{k-1}\).

Define the finite sets
\[
 A_k=\{g_i^{-1}:1\leq i\leq N_k\},
 \qquad
 B_k=D_{k-1}\cup\{e\}.
\]
By (2), choose
\[
 h_k\notin A_kV^2B_k.                                      \tag{4}
\]
We claim that
\[
 D=\{g_i h_k:k\geq1,\ 1\leq i\leq N_k\}                  \tag{5}
\]
is $V$-discrete.

Indeed, consider two distinct points in the same $k$-th block. 
If $i\neq j$ and $ Vg_i h_k\cap Vg_j h_k\neq\varnothing$,
then $g_i g_j^{-1}\in V^2$, contrary to (3).  

Next, let
$y\in D_{k-1}$.  
If $Vg_i h_k\cap Vy\neq\varnothing$,
then $ g_i h_k y^{-1}\in V^2$, and therefore
$ h_k\in g_i^{-1}V^2y\subseteq A_kV^2B_k$,
contrary to (4).  Induction on $k$ proves the claim.

\vspace{.3cm}

Define $u:D\to\{-1,1\}$ by
\[
 u(g_i h_k)=\varepsilon^{(k)}_i
 \qquad (k\geq1,\ 1\leq i\leq N_k).
\]
Since $D$ is right uniformly discrete, the uniform extension lemma gives
an extension $f\in\RUC(G)$ with
$ \|f\|_\infty=1$.

We show that $(L_{g_n}f)_{n\geq1}$ is isometrically equivalent to the
standard basis of $\ell_1$.  Let $N\geq1$ and let
$a_1,\dots,a_N$ be real scalars.  Choose signs
$\varepsilon_i\in\{-1,1\}$ so that
\[
 a_i\varepsilon_i=|a_i| \qquad (1\leq i\leq N).
\]
There is some $k$ with
\[
 N_k=N
 \quad\text{and}\quad
 \varepsilon^{(k)}=(\varepsilon_1,\dots,\varepsilon_N).
\]
Evaluating at \(h_k\), we obtain
\begin{align*}
 \left(\sum_{i=1}^N a_i(f\cdot g_i)\right)(h_k)  & =\sum_{i=1}^N a_i(f\cdot g_i)(h_k)\\
 &=\sum_{i=1}^N a_i f(g_i h_k)  =\sum_{i=1}^N a_i\varepsilon_i\\
 &=\sum_{i=1}^N|a_i|.
\end{align*}
Hence
\[
 \left\|\sum_{i=1}^N a_i(f\cdot g_i)\right\|_\infty
 \geq\sum_{i=1}^N|a_i|.
\]
The reverse inequality follows from \(\|f\|_\infty=1\), and therefore
\[
 \left\|\sum_{i=1}^N a_i(f\cdot g_i)\right\|_\infty
 =\sum_{i=1}^N|a_i|.
\]
Thus the orbit \(fG\) contains a sequence isometric to the standard
basis of \(\ell_1\), so \(f\) is not tame. Consequently, a
non-precompact Hausdorff topological group is not tame.
\end{proof}

\section{Independence vs. $\ell_1$}

In this section we compare for a 
norm-bounded family of functions in $\ell_1$ the property of having an independent subsequence
and having an $\ell_1$ subsequence, see \cite{GM-18}.
It was shown by Rosenthal \cite[Proposition 4]{Rosenthal1974} that every bounded independent sequence is an \(\ell_{1}\)-sequence.
The other direction was shown to me by Michael Megrelishvili.

\begin{lemma}\label{lem:l1-independent-arbitrary-set}
Let \(X\) be an arbitrary set, and let
\[
\mathcal{F}\subset \ell_{\infty}(X)
\]
be a norm-bounded family. Then the following conditions are equivalent:
\begin{enumerate}
    \item \(\mathcal{F}\) contains an \(\ell_{1}\)-sequence;
    \item \(\mathcal{F}\) contains an infinite independent sequence.
\end{enumerate}
More precisely, every independent sequence in \(\ell_{\infty}(X)\) is an
\(\ell_{1}\)-sequence, and every \(\ell_{1}\)-sequence in
\(\ell_{\infty}(X)\) has an independent subsequence.
\end{lemma}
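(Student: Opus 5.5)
The plan is to prove the two implications separately. The second, producing an independent subsequence from an $\ell_1$-sequence, reduces to Rosenthal's dichotomy once we pass to a compactification of $X$.

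\emph{Independent $\Rightarrow$ $\ell_1$.} Let $(f_n)$ be independent with constants $a<b$, and put $M=\sup_n\|f_n\|_\infty<\infty$. The upper estimate $\|\sum c_kf_k\|\le M\sum|c_k|$ is just the triangle inequality. For the lower estimate, fix $c_1,\dots,c_n$. Let $P=\{k:c_k\ge 0\}$ and $Q=\{k:c_k<0\}$.
\begin{itemize}
\item Independence gives a point $x$ with $f_k(x)>b$ for $k\in P$ and $f_k(x)<a$ for $k\in Q$.
\item Applying it again with the roles of $P$ and $Q$ swapped gives a point $y$ with $f_k(y)<a$ for $k\in P$ and $f_k(y)>b$ for $k\in Q$.
\end{itemize}
In every term, $c_k\bigl(f_k(x)-f_k(y)\bigr)\ge (b-a)|c_k|$. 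Therefore
\[
2\Bigl\|\sum c_kf_k\Bigr\|\ge \sum c_k\bigl(f_k(x)-f_k(y)\bigr)\ge (b-a)\sum|c_k|,
\]
which gives the lower estimate with constant $(b-a)/2$. This is Rosenthal's Proposition~4, and the argument uses nothing about $X$.

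\emph{$\ell_1$ $\Rightarrow$ independent subsequence.} First identify $\ell_\infty(X)$ isometrically with $C(\beta X)$: each bounded $f$ extends uniquely to a continuous $\widehat f$ on the Stone--Čech compactification, with the same sup norm. So $(\widehat f_n)$ is an $\ell_1$-sequence in $C(\beta X)$. Next apply Rosenthal's dichotomy (the combinatorial core of the $\ell_1$ theorem) on the set $\beta X$: a bounded sequence of real functions either has a pointwise convergent subsequence or has an independent subsequence.
\begin{itemize}
\item \emph{First alternative impossible.} Suppose a subsequence $(\widehat f_{n_j})$ converges pointwise on $\beta X$. By the Riesz representation and dominated convergence it is weakly Cauchy in $C(\beta X)$. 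Restricting functionals, it is weakly Cauchy in the closed span $Y$ of the sequence. The isomorphism $Y\cong\ell_1$ carries it to the subsequence $(e_{n_j})$ of the unit vector basis, which would then be weakly Cauchy in $\ell_1$. By Schur's theorem it would be norm Cauchy, contradicting $\|e_{n_j}-e_{n_i}\|=2$.
\item \emph{Second alternative.} So some subsequence $(\widehat f_{n_j})$ is independent on $\beta X$, with constants $a<b$.
\end{itemize}

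It remains to bring independence back to $X$. For finite disjoint $P,Q$, the set
\[
\bigcap_{j\in P}\{\widehat f_{n_j}<a\}\cap\bigcap_{j\in Q}\{\widehat f_{n_j}>b\}
\]
is a finite intersection of open sets, because the inequalities are strict and the $\widehat f_{n_j}$ are continuous. It is nonempty by independence on $\beta X$, and $X$ is dense in $\beta X$, so the set meets $X$. Since $\widehat f_{n_j}=f_{n_j}$ on $X$, the subsequence $(f_{n_j})$ is independent on $X$. This proves the ``more precisely'' statement, and the equivalence of (1) and (2) for a norm-bounded $\mathcal F$ follows immediately.

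The main obstacle is the dichotomy itself. Its proof is Rosenthal's Ramsey-type argument, and I would cite it from \cite{Rosenthal1974} rather than reprove it. The only delicate point in the plan is why we pass to $\beta X$. Pointwise convergence on $X$ alone does not give weak Cauchyness in $\ell_\infty(X)$, whose dual is larger than $\ell_1(X)$. On $\beta X$ the dual is the space of Radon measures, so dominated convergence applies.
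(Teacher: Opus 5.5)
Your proof is correct and follows the same plan as the paper. For one direction you use Rosenthal's Proposition~4; the $M<\infty$ you need for the upper estimate holds only for sequences taken from the norm-bounded family $\mathcal F$, and the paper's proof likewise establishes only the lower estimate. For the converse, like the paper, you extend the $f_n$ to a compactification, find an independent subsequence there, and pull it back to $X$ using density and the openness of the strict-inequality sets.

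The only differences are these. You use $\beta X$ instead of the metrizable compactum $K=\overline{\Phi(X)}\subset[-M,M]^{\mathbb N}$. You also prove, rather than cite from van Dulst, the $C(K)$ fact that $\ell_1$-sequences have independent subsequences, deriving it from Rosenthal's dichotomy via Riesz representation, dominated convergence and Schur's theorem, which works on any compact Hausdorff space.
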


\begin{proof}
We first show that every independent sequence is an
\(\ell_{1}\)-sequence. 
This is  \cite[Prop. 4]{Rosenthal1974}. We include a proof for completeness. 

Let \((f_n)\) be independent, witnessed by
\(a<b\). Fix finitely many scalars \(c_1,\ldots,c_m\), and put
\[
P=\{i:c_i\geq 0\},
\qquad
Q=\{i:c_i<0\}.
\]
By independence, there exist \(x_{+},x_{-}\in X\) such that
\[
\begin{aligned}
&f_i(x_{+})>b &&\text{for }i\in P,
&
&f_i(x_{+})<a &&\text{for }i\in Q,\\
&f_i(x_{-})<a &&\text{for }i\in P,
&
&f_i(x_{-})>b &&\text{for }i\in Q.
\end{aligned}
\]
Writing
\[
S(x)=\sum_{i=1}^{m}c_i f_i(x),
\]
we obtain
\[
S(x_{+})-S(x_{-})
>
(b-a)\sum_{i=1}^{m}|c_i|.
\]
Consequently,
\[
2\left\|\sum_{i=1}^{m}c_i f_i\right\|_{\infty}
\geq
|S(x_{+})|+|S(x_{-})|
\geq
|S(x_{+})-S(x_{-})|,
\]
and therefore
\[
\left\|\sum_{i=1}^{m}c_i f_i\right\|_{\infty}
\geq
\frac{b-a}{2}\sum_{i=1}^{m}|c_i|.
\]
Thus, \((f_n)\) is an \(\ell_{1}\)-sequence.

Conversely, suppose that \((f_n)_{n\in\mathbb{N}}\subset\mathcal{F}\)
is an \(\ell_{1}\)-sequence. Thus, there exists \(c>0\) such that
\[
c\sum_{i=1}^{m}|c_i|
\leq
\left\|\sum_{i=1}^{m}c_i f_i\right\|_{\infty}
\]
for every \(m\in\mathbb{N}\) and every choice of scalars
\(c_1,\ldots,c_m\).

Since \(\mathcal{F}\) is norm bounded, there exists \(M>0\) such that
\(\|f_n\|_{\infty}\leq M\) for every \(n\). Define
\[
\Phi:X\longrightarrow [-M,M]^{\mathbb{N}},
\qquad
\Phi(x)=\bigl(f_1(x),f_2(x),\ldots\bigr),
\]
and let
\[
K=\overline{\Phi(X)}
\]
be the closure in the product topology. Then \(K\) is a compact
metrizable space.

For every \(n\in\mathbb{N}\), let
\[
p_n:K\longrightarrow \mathbb{R},
\qquad
p_n(z)=z_n,
\]
be the \(n\)-th coordinate function. Then \(p_n\in C(K)\) and
\[
p_n\circ\Phi=f_n.
\]
Moreover, for every finite sequence of scalars \(c_1,\ldots,c_m\),
the density of \(\Phi(X)\) in \(K\) gives
\[
\begin{aligned}
\left\|\sum_{i=1}^{m}c_i p_i\right\|_{C(K)}
&=
\sup_{z\in K}\left|\sum_{i=1}^{m}c_i z_i\right|\\
&=
\sup_{x\in X}\left|\sum_{i=1}^{m}c_i f_i(x)\right|\\
&=
\left\|\sum_{i=1}^{m}c_i f_i\right\|_{\infty}.
\end{aligned}
\]
Hence \((p_n)\) is an \(\ell_{1}\)-sequence in \(C(K)\).

By the standard compact-domain theorem for bounded sequences in
\(C(K)\), an \(\ell_{1}\)-sequence has an independent subsequence \cite[Theorem 3.11]{vD-89}.
Thus, after passing to a subsequence, there exist real numbers
\(a<b\) such that \((p_{n_k})_{k\in\mathbb{N}}\) is independent on
\(K\).

We claim that \((f_{n_k})_{k\in\mathbb{N}}\) is independent on \(X\)
with the same constants \(a<b\). Indeed, let \(P,Q\subset\mathbb{N}\)
be finite and disjoint. Since \((p_{n_k})\) is independent, the set
\[
U_{P,Q}
=
\bigcap_{k\in P}\{z\in K:p_{n_k}(z)<a\}
\cap
\bigcap_{k\in Q}\{z\in K:p_{n_k}(z)>b\}
\]
is a nonempty open subset of \(K\). Since \(\Phi(X)\) is dense in
\(K\), there exists \(x\in X\) such that \(\Phi(x)\in U_{P,Q}\).
It follows that
\[
f_{n_k}(x)<a\quad (k\in P),
\qquad
f_{n_k}(x)>b\quad (k\in Q).
\]
Therefore \((f_{n_k})\) is independent on \(X\), as required.
\end{proof}

\begin{cor}\label{cor:strong-tame-equals-tame}
Let \(G\) be a topological group and \(f\in \operatorname{RUC}(G)\).
Then the orbit \(fG\) contains an \(\ell_{1}\)-sequence if and only if
it contains an infinite independent sequence. 
\end{cor}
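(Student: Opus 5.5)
The corollary follows by applying Lemma~\ref{lem:l1-independent-arbitrary-set} with the underlying set \(X=G\) and the family \(\mathcal F=fG\subset\ell_\infty(G)\). Two preliminary checks are needed. First, \(\mathcal F\) is norm bounded: for every \(g\in G\), \(\|f\cdot g\|_\infty=\sup_{h}|f(gh)|=\|f\|_\infty\), because \(h\mapsto gh\) is a bijection of \(G\). Second, every element of \(fG\) is a bounded real-valued function on \(G\), so \(fG\) sits inside \(\ell_\infty(G)\), which is the setting in which the lemma is stated for an arbitrary set \(X\). No topology on \(G\) enters at this point; right uniform continuity of \(f\) is not used.

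With these checks in place, I would verify the two implications in turn.

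\emph{From independent to \(\ell_1\).} Suppose \(fG\) contains an infinite independent sequence \((f\cdot g_n)\). Its elements may be taken pairwise distinct, since the defining condition with \(P=\{n\}\) and \(Q=\{m\}\) separates \(f\cdot g_n\) from \(f\cdot g_m\). The first half of the lemma (Rosenthal's estimate) then shows that \((f\cdot g_n)\) is itself an \(\ell_1\)-sequence. The upper bound is automatic here, since \(\|\sum c_n f\cdot g_n\|_\infty\le\|f\|_\infty\sum|c_n|\).

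\emph{From \(\ell_1\) to independent.} Suppose \(fG\) contains an \(\ell_1\)-sequence \((f\cdot g_n)\). The second half of the lemma gives a subsequence \((f\cdot g_{n_k})\) that is independent on \(G\). This subsequence still lies in \(fG\), so it is the required infinite independent sequence.

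I expect no real obstacle in this argument. The only point to handle carefully is a small inconsistency in the paper's definition of an \(\ell_1\) sequence, where the same constant \(a\) appears on both sides of the inequality. I will read the definition with the upper constant equal to \(\sup_n\|f\cdot g_n\|=\|f\|_\infty\), which is always available, so only the lower bound carries content. Under this reading, ``contains a sequence equivalent to the standard basis of \(\ell_1\)'' (the definition of tameness) and ``contains an \(\ell_1\)-sequence'' mean the same thing. As a consequence, \(f\) is tame if and only if \(fG\) contains no infinite independent sequence, which is the equivalence announced in the Remark.
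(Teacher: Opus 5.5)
Your proposal is correct and follows the paper's argument exactly: the paper also notes that right translations preserve the sup norm, so $fG$ is a norm-bounded subset of $\ell_\infty(G)$, and then applies Lemma~\ref{lem:l1-independent-arbitrary-set} with $X=G$. Your extra remarks leave the proof unchanged; they only spell out what the paper leaves implicit, namely that independent sequences automatically have distinct terms and that the repeated constant in the $\ell_1$ definition is a typo whose upper constant can be taken to be $\|f\|_\infty$.
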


\begin{proof}
Right translations preserve the supremum norm, so
\[
\|fg\|_{\infty}=\|f\|_{\infty}
\qquad (g\in G).
\]
Thus \(fG\) is a norm-bounded subset of \(\ell_{\infty}(G)\), and
Lemma~\ref{lem:l1-independent-arbitrary-set} applies.
\end{proof}

\medskip
\noindent\textbf{Acknowledgment.}
I thank Michael Megrelishvili for helpful remarks.   ChatGPT plus was 
also helpful in locating references and in correcting errors in a
preliminary version of this note.

\end{document}